
\documentclass{elsart1p}
\textwidth=6in
\textheight 8.5in
\topmargin -0in
\evensidemargin -0in
\oddsidemargin -0in



\usepackage{amsmath,amssymb}
\usepackage{graphicx}
\usepackage{subfigure,color}
\usepackage{epsfig}
\usepackage[misc]{ifsym}

\usepackage{color}
\newcommand{\abs}[1]{\left\vert#1\right\vert}
\newcommand{\norm}[1]{\left\Vert#1\right\Vert}
\newtheorem{theorem}{Theorem}[section]

\newenvironment{proof}[1][Proof]{\begin{trivlist}
\item[\hskip \labelsep {\bfseries #1}]}{\end{trivlist}}

\begin{document}

\begin{frontmatter}


\title{An energy preserving finite difference scheme for the Poisson-Nernst-Planck system}


\author[Tongji]{Dongdong He\ead{dongdonghe@tongji.edu.cn}}\and
\author[Zhongnan,UNCC]{Kejia Pan\corauthref{cor}\ead{pankejia@hotmail.com}}
\corauth[cor]{Corresponding author.}
\address[Tongji]{School of Aerospace Engineering and Applied Mechanics,
Tongji University, Shanghai 200092, China.}
\address[Zhongnan]{ School of Mathematics and Statistics, Central South University, Changsha 410083, China}
\address[UNCC]{Department of Mathematics and Statistics, University of North Carolina at Charlotte, Charlotte, NC 28223, USA}
\begin{abstract}
In this paper,
we construct a semi-implicit finite difference method for the time dependent Poisson-Nernst-Planck system. Although the Poisson-Nernst-Planck system is a nonlinear system, the numerical method presented in this paper only needs to solve a linear system at each time step, which can be done very efficiently. The rigorous proof for the mass conservation and  electric potential energy decay are shown.  Moreover, mesh refinement analysis shows that the method is second order convergent in space and first order convergent in time.
Finally we point out that our method can be easily extended to the case of multi-ions.
\end{abstract}

\begin{keyword}
Poisson-Nernst-Planck system, finite difference method, mass
conservation, ion concentration positivity, energy decay
\end{keyword}

\end{frontmatter}
\newpage
\section{Introduction}

The classical unsteady dimensionless drift-diffusion system which
describes the evolution of positive and negative charged particles
$p, n$, and electric potential $\phi$ is given as
follows~\cite{Prohl}
\begin{equation}\label{potential}
\left\{ \begin{aligned}
  p_{t}&=\nabla\cdot(\nabla p+p\nabla\phi), \ \textrm{in}\  \Omega_T:=(0,T]\times\Omega,\\
n_{t}&=\nabla\cdot(\nabla n-n\nabla\phi), \ \textrm{in}\ \Omega_T,\\
-\Delta\phi&=p-n, \ \textrm{in}\ \Omega_T,
        \end{aligned} \right.
\end{equation}
where $\Omega$ is a bounded domain, $[0,T]$ is the time interval, the length scale is chosen as the
Debye length,  and the time scale is chosen as the diffusive time
scale. Note that
the Debye length is much smaller than the physical characteristic
length in most cases.  Thus, if the length scale is chosen as the
physical characteristic length scale in these cases, there will be a small
parameter in front of the electric potential
term in the Poisson equation (\ref{potential}), which will result in  a singular perturbation
problem~\cite{Chen, Wang, Singer,GILLESPIE}. However, in this paper, we only consider
the case that the characteristic length scale is the same order of
the Debye length, which also has some applications. For example, inside the ion channel in the cell
membrane, the characteristic length scale of the ion channel is the
same order of the Debye length. In this situation, we could choose the Debye length as
the length scale so that the dimensionless system (\ref{potential}) is meaningful. The above system is called Poisson-Nernst-Planck system,
which was first formulated by W. Nernst and M. Planck to describe
the potential difference in a galvanic cell. The system has lots of
applications in electrochemistry~\cite{Roubicek},
biology~\cite{Eisenberg} and semiconductors~\cite{Jerome,Biler,Gajewski}. Based on the analytical derivations for the energy and entropy laws, Schmuck theoretically proved the existence, and in some cases uniqueness of the weak solution for the more general
context of the Navier-Stokes-Nernst-Planck-Poisson system \cite{Schmuck}. In this paper, we focus on the numerical solution of the Poisson-Nernst-Planck system (\ref{potential}).

When numerically solving the PDEs, to keep the original
physical feature is greatly important in constructing numerical
schemes for different physical problems. For example, one successful
and active research is to construct structure-preserving scheme for
the ODE systems (see~\cite{Hairer} and references therein). Only
schemes that are carefully designed can preserve mass and energy conservative properties. 
For example,
finite difference methods were developed for solving the Euler Equations and Burgers equations to preserve the discrete energy dynamics \cite{Fisher}. In \cite{Hof}, authors used a finite volume method for the shallow water equations which conserves the mass, momentum and energy of the system.
A finite difference method was presented in \cite{Li} for solving the nonlinear Klein-Gordon equation which preserves the total energy.
Qiao et al. \cite{Qiao} showed an unconditionally stable finite difference scheme for the dynamics of the molecular beam epitaxy, where the scheme preserves the energy decay rate exactly at discrete level.
In \cite{Celledoni}, authors developed a general method of discretizing PDEs that preserving the energy using the average vector field method. Chiu et al.  \cite{Chiu} developed a general mesh-free scheme for solving PDEs that can preserve the energy at discrete level. Chang et al.~\cite{Chang}
 discussed conservative and nonconservative properties of eight finite difference schemes for solving the generalized
nonlinear Schr$\ddot{\textrm{o}}$dinger equation. Chen et al.~\cite{Wenbin1, Wenbin2} proposed energy conservative finite difference schemes for solving the 2D and 3D Maxwell equations, respectively.

In the past several decades, there appears a wide range of literature on
numerical methods for the Poisson-Nernst-Planck system, including
finite difference method, finite element method, and finite volume
method, see~\cite{Snowden,Brezzi,Douglas,Miller,Peng,Liu,Liu2004,Bessemoulin,Flavell,LiuHL} and references
therein. Here we just mention some of the recent work, Bessemoulin-Chatard~\cite{Bessemoulin} gave a conservative  finite volume method for solving the drift-diffusion equation, where the entropy inequality is preserved. Flavell et al.~\cite{Flavell} and Liu et al.~\cite{LiuHL} constructed  two different conservative finite difference methods which satisfy the mass preserving, ion concentration positivity as well as total free energy dissipation numerically, where the total free energy is related to both electric potential and ion concentration, which is called entropy in \cite{Prohl}.
And Prohl et al.~\cite{Prohl} presented two different finite element methods which satisfy  electric potential energy decay and entropy decay properties, respectively.
 Now we briefly illustrate the first part of the work in~\cite{Prohl} as follows:
under the following initial conditions and zero Neumann boundary conditions
\begin{equation}\label{initial}
p(0,\vec{x})=p_{0}(\vec{x})\geq0, \  n(0,\vec{x})=n_{0}(\vec{x})\geq0,
\ \textrm{in}\ \Omega,
\end{equation}
\begin{equation}\label{boundary}
\frac{\partial \phi}{\partial \vec{n}}=\frac{\partial n}{\partial \vec{n}}=\frac{\partial p}{\partial \vec{n}}=0,  \ \textrm{on}\
\partial\Omega_T:=(0,T]\times \partial \Omega.
\end{equation}
It is well known~\cite{Biler} that the non-negative $p$, $n$ is conserved in $\Omega_T$, and the system (\ref{boundary}) satisfies
mass conservation, that is, for any $t\in (0,T]$,
\begin{equation}\label{mass}
M_{p}\equiv\int_{\Omega}p_{0}(\vec{x})d\vec{x}=\int_{\Omega}p(t,\vec{x})d\vec{x},\;
M_{n}\equiv\int_{\Omega}n_{0}(\vec{x})d\vec{x}=\int_{\Omega}n(t,\vec{x})d\vec{x},
\end{equation}
where $M_{p}$, $M_{n}$ are two positive constants which  must be the same,
since from (\ref{potential}) and (\ref{boundary}) we have
$$M_{p}-M_{n}=\int_{\Omega}(p-n)dx=-\int_{\partial\Omega}\frac{\partial \phi}{\partial \vec{n}}ds=0.$$
And it is also shown in~\cite{Biler} and \cite{Schmuck} that the system satisfies the following energy law
\begin{equation}\label{energy1}
E(t)+\int^{t}_{0}\int_{\Omega}(({p-n})^{2}+(p+n)\abs{\nabla\phi}^{2}) d\vec{x}dt=E(0),
\end{equation}
where $E(t)=\frac{1}{2}\int_{\Omega}|\nabla\phi|^{2} d\vec{x}$ is
the electric potential energy.
The above energy law~(\ref{energy1}) can be rewritten as
\begin{equation}\label{energy}
\frac{dE}{dt}=-\int_{\Omega}((p-n)^{2}+(p+n)|\nabla\phi|^{2})
d\vec{x}.
\end{equation}

 Prohl et al.~\cite{ Prohl} proposed a  finite element method which can preserve the mass conservation (\ref{mass}), ion concentration positivity and electric potential energy decay (\ref{energy1}) in \cite{Prohl}.
However, the scheme in \cite{Prohl} is fully implicit, one has to solve a nonlinear system at each time step.
In~\cite{Prohl},  a fixed point iteration method is used to solve the nonlinear system at each time step in order to get the rigorous physical quantities preserving results. In this paper, we present a simple
semi-implicit finite difference method for the Poisson-Nernst-Planck
system.  For the new scheme, the unknown variables at next time step form a linear
system which can be solved efficiently, no iteration is needed. Furthermore,
the new scheme preserves mass conservation and electric potential energy identity
numerically. Numerical results confirm the above properties.
Mesh refinement analysis shows that the method is second order convergent in space and  first order convergent in time. Finallly, we point out that our method can be extended to the case of multi-ions without any difficulty.

 The rest of the paper is organized as follows, section \ref{sec:3} gives the detailed numerical scheme and its properties,
 section \ref{sec:4} discusses the extension of the method for the case of multi-ions,
 section \ref{sec:5} shows the numerical results, and conclusions and discussions are given in the final section.

\section{Numerical method}\label{sec:3}

In this section, we will develop a finite difference method which
can guarantee the mass conservation (\ref{mass}) and energy decay
(\ref{energy}) numerically.

Although the method presented in the following can be extended into
three dimension without any difficulty, we only give a detailed
description when $\Omega$ is a two dimensional rectangular domain,
i.e. $\Omega=[a,b]\times[c,d]$. Let $N_{x},N_{y}$ be positive
integers, the domain $\Omega$ is uniformly partitioned with
$\Delta x=\frac{b-a}{N_{x}}, \Delta y=\frac{d-c}{N_{y}}$ and variables are stored at each
cell center as follows
\begin{align}
\Omega_{h}=\{(x_{j},y_{k})|x_{j}=a+(j-\frac{1}{2})\Delta
x,y_{k}=c+(k-\frac{1}{2})\Delta y, 1\leq j\leq N_{x}, 1\leq k\leq
N_{y}\}.\nonumber
\end{align}
And time step is denoted by $\Delta t$.

 For a given two-dimensional grid function $f_{j,k}$, we define
the following difference operators:
\begin{align*}
&\nabla_{h}f_{j,k}=(\frac{f_{j+1,k}-f_{j,k}}{\Delta
x},\frac{f_{j,k+1}-f_{j,k}}{\Delta y}),\\
&\Delta_{h}f_{j,k}=\frac{f_{j+1,k}-2f_{j,k}+f_{j-1,k}}{(\Delta
x)^{2}}+\frac{f_{j,k+1}-2f_{j,k}+f_{j,k-1}}{(\Delta
y)^{2}}, \\
&\delta_{x}f_{j,k}=\frac{f_{j+1/2,k}-f_{j-1/2,k}}{\Delta x}, \;\;\delta_{y}f_{j,k}=\frac{f_{j,k+1/2}-f_{j,k-1/2}}{\Delta y},\\
&\partial^{h}_{x}f_{j,k}=\frac{f_{j,k}-f_{j-1,k}}{\Delta
x},\;\; \partial^{h}_{y}f_{j,k}=\frac{f_{j,k}-f_{j,k-1}}{\Delta y},
\end{align*}
where
\begin{align}
f_{j+1/2,k}=\frac{f_{j,k}+f_{j+1,k}}{2},\;\; f_{j,k+1/2}=\frac{f_{j,k}+f_{j,k+1}}{2}.\nonumber
\end{align}

The discrete $L^{2}$ inner product and the discrete $L^{2}$ norm are
defined as
\begin{align}
<f,g>_{h}=\sum^{N_{x}}_{j=1}\sum^{N_{y}}_{k=1}f_{j,k}g_{j,k}\Delta
x\Delta y,\quad \parallel f\parallel^{2}_{h}=<f,f>_{h}.
\nonumber
\end{align}
We also define
\begin{equation*}
<\nabla f,\nabla g>_{h}=\sum^{N_{x}}_{j=1}\sum^{N_{y}}_{k=1}\nabla_{h}f_{j,k}\cdot\nabla_{h}g_{j,k}\Delta
x\Delta y,\quad \parallel \nabla f\parallel^{2}_{h}=<\nabla f, \nabla f>_{h}.
\end{equation*}

\subsection{Description of the method}

For zero Neumann boundary conditions (\ref{boundary}), we define the
values on center of the fictitious cells outside the boundary as
follows
\begin{align}\label{0_Neumann}
p_{0,k}=p_{1,k}, p_{j,0}=p_{j,1}, p_{N_{x}+1,k}=p_{N_{x},k},
p_{j,N_{y}+1}=p_{j,N_{y}},\nonumber\\
n_{0,k}=n_{1,k}, n_{j,0}=n_{j,1}, n_{N_{x}+1,k}=n_{N_{x},k},
n_{j,N_{y}+1}=n_{j,N_{y}},\\
\phi_{0,k}=\phi_{1,k},
\phi_{j,0}=\phi_{j,1}, \phi_{N_{x}+1,k}=\phi_{N_{x},k},
\phi_{j,N_{y}+1}=\phi_{j,N_{y}}.\nonumber
\end{align}
where values on the the fictitious cells are denoted by the
subscript with $0$, $N_{x}+1$, and $N_{y}+1$.


Our scheme for the Poisson-Nernst-Planck system (\ref{potential}) is as follows
\begin{align} \label{p_discrete}
\frac{p^{m+1}_{j,k}-p^{m}_{j,k}}{\Delta
t}=\Delta_{h}{p^{m+1/2}_{j,k}}+\left(\delta_x(p^m_{j,k}\delta_x\phi^{m+1/2}_{j,k})+\delta_y(p^m_{j,k}\delta_y\phi^{m+1/2}_{j,k})\right),
\nonumber \\\ \textrm{for} \ j=1\cdots N_{x}, k=1\cdots N_{y}, m\geq0,
\end{align}
\begin{align} \label{n_discrete}
\frac{n^{m+1}_{j,k}-n^{m}_{j,k}}{\Delta
t}=\Delta_{h}{n^{m+1/2}_{j,k}}-\left(\delta_x(n^m_{j,k}\delta_x\phi^{m+1/2}_{j,k})+\delta_y(n^m_{j,k}\delta_y\phi^{m+1/2}_{j,k})\right),
\nonumber \\\ \textrm{for}\ j=1\cdots N_{x}, k=1\cdots N_{y}, m\geq0,
\end{align}
\begin{equation} \label{phi_discrete}
-\Delta_{h}\phi^{m+1}_{j,k}=p^{m+1}_{j,k}-n^{m+1}_{j,k},\ \ \textrm{for}
\  j=1\cdots N_{x}, k=1\cdots N_{y}, m\geq-1,
\end{equation}
where $$\phi^{m+1/2}_{j,k}=\frac{\phi^{m}_{j,k}+\phi^{m+1}_{j,k}}{2}, p^{m+1/2}_{j,k}=\frac{p^{m}_{j,k}+p^{m+1}_{j,k}}{2}, n^{m+1/2}_{j,k}=\frac{n^{m}_{j,k}+n^{m+1}_{j,k}}{2}.$$

We should mention that the discrete equation for electric potential
(\ref{phi_discrete}) is used for $m=-1$, since there is no initial
conditions for $\phi$, see (\ref{initial}).

\subsection{Implementation of the finite difference method}
The method (\ref{p_discrete})-(\ref{phi_discrete}) is a
semi-implicit method, which can be implemented efficiently. Assume
that the quantities $p,n,\phi$ are known at the previous time step
$m$, and rewrite equations (\ref{p_discrete})-(\ref{phi_discrete}) in matrix
and vector form as follows
\begin{align} \label{p_matrix}
(\frac{I}{\Delta t}-\frac{F}{2})P^{m+1}=(\frac{I}{\Delta
t}+\frac{F}{2})P^{m}+A(P^{m})\frac{\Phi^{m+1}+\Phi^{m}}{2},
\end{align}
\begin{align} \label{n_matrix}
(\frac{I}{\Delta t}-\frac{F}{2})N^{m+1}=(\frac{I}{\Delta
t}+\frac{F}{2})N^{m}-A(N^{m})\frac{\Phi^{m+1}+\Phi^{m}}{2},
\end{align}
\begin{equation} \label{phi_matrix}
F\Phi^{m+1}=N^{m+1}-P^{m+1},
\end{equation}
where $I$ is the identity matrix, $F$ is the matrix form of discrete
Laplacian operator $\Delta_{h}$, $A(P^{m})$ is the coefficient
matrix obtained from the second term of (\ref{p_discrete}), which is a linear operator for
$P^{m}$, and $P^{m+1},N^{m+1},\Phi^{m+1}$ are vector form of
$p,n,\phi$ at time step $m+1$.

As we can see that (\ref{p_matrix})-(\ref{phi_matrix}) is actually a
linear system of unknowns $P^{m+1}$, $N^{m+1}$ and $\Phi^{m+1}$, we
can eliminate $P^{m+1}$ and $N^{m+1}$ in (\ref{phi_matrix}) using
(\ref{p_matrix}) and (\ref{n_matrix}), this yields
\begin{equation} \label{phi_matrix_matrix}
((\frac{2}{\Delta
t}I-F)F+A(P^{m}+N^{m}))\Phi^{m+1}=(\frac{2}{\Delta
t}I+F)(N^{m}-P^{m})-A(P^{m}+N^{m})\Phi^{m}.
\end{equation}

Once $\Phi^{m+1}$ is obtained, we can
obtain $P^{m+1}$ and $N^{m+1}$ by solving (\ref{p_matrix}) and
(\ref{n_matrix}), respectively.

As mentioned in the introduction, $p$, $n$ are non-negative in the entire time interval $[0, T]$ as theoretically shown in~\cite{Biler}. In addition, through lots of numerical tests, it is found that the numerical solutions of $P^m, N^m$ from the scheme (\ref{p_discrete})-(\ref{phi_discrete}) are also non-negative. If we make an assumption that the finite difference approximation of the derivative of $\phi^m$ is uniformly bounded in the entire computational time as done in~\cite{Flavell} (see equation (37) in~\cite{Flavell}), then we can prove that the numerical solutions of $P^m, N^m$ are also non-negative under some restriction of the time step size and mesh size followed by a similar proof of~\cite{Flavell}. However, in general,  it is not suitable to make such a  prior assumption for the numerical solution which involves the quantities in the next time step to prove the properties of the numerical solutions. Thus, it will be a very hard task to prove the non-negativity of $P^m, N^m$ without the prior assumption for the finite difference approximation of the derivative of $\phi^m$. But a lot of numerical tests show that the numerical scheme (\ref{p_discrete})-(\ref{phi_discrete}) produces non-negative $P^m, N^m$. Thus, in the following discussion, we simply assume that the numerical solutions $P^m, N^m$ are non-negative.

\begin{theorem}\label{thm0}
Within the numerical solution of $\Phi^{m+1}$ up to a constant, the numerical solutions $P^{m+1}, N^{m+1}, \Phi^{m+1}$ of (\ref{p_discrete})-(\ref{phi_discrete}) are unique.
\end{theorem}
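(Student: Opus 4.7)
The plan is to reduce the claim to two facts about the reduced elliptic system (\ref{phi_matrix_matrix}): (i) the matrix $M:=(\tfrac{2}{\Delta t}I-F)F+A(P^m+N^m)$ has a one-dimensional null space consisting exactly of the constant grid functions, and (ii) replacing $\Phi^{m+1}$ by $\Phi^{m+1}+c$ leaves the right-hand sides of (\ref{p_matrix})--(\ref{n_matrix}) unchanged, because $A(P^m)$ and $A(N^m)$ are discrete divergences of weighted gradients and hence annihilate constants. Combined with invertibility of $\tfrac{I}{\Delta t}-\tfrac{F}{2}$, these two facts will deliver the theorem: $\Phi^{m+1}$ is determined up to an additive constant, and $P^{m+1},N^{m+1}$ are then determined outright.

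First I would check that $\tfrac{I}{\Delta t}-\tfrac{F}{2}$ is symmetric positive definite. Under the ghost-cell prescription (\ref{0_Neumann}), $F$ is symmetric and negative semi-definite (standard discrete Laplacian with zero Neumann data), so every eigenvalue of $\tfrac{I}{\Delta t}-\tfrac{F}{2}$ is bounded below by $\tfrac{1}{\Delta t}>0$; this both gives fact~(ii)'s final step and the invertibility needed in (\ref{p_matrix})--(\ref{n_matrix}). For the kernel of $M$ I would take the discrete $L^{2}$ inner product of $M\Phi=0$ with $\Phi$. Using symmetry of $F$ and discrete summation by parts against (\ref{0_Neumann}),
\begin{equation*}
\langle(\tfrac{2}{\Delta t}I-F)F\Phi,\Phi\rangle_h=\tfrac{2}{\Delta t}\langle F\Phi,\Phi\rangle_h-\|F\Phi\|_h^{2}=-\tfrac{2}{\Delta t}\|\nabla\Phi\|_h^{2}-\|F\Phi\|_h^{2}.
\end{equation*}
A second summation by parts on $A(P^m+N^m)=\delta_x((p^m+n^m)\delta_x\cdot)+\delta_y((p^m+n^m)\delta_y\cdot)$ rewrites $\langle A(P^m+N^m)\Phi,\Phi\rangle_h$ as a sum of the form $-\sum_{\text{faces}}(p^m+n^m)_{\text{face}}|\delta\Phi|^{2}$, which is non-positive under the standing assumption $P^m,N^m\ge 0$. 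Since these two non-positive quantities sum to $\langle M\Phi,\Phi\rangle_h=0$, each must vanish; in particular the first piece forces $\|\nabla\Phi\|_h=0$, and hence $\Phi$ is a constant grid function. Therefore $\ker M$ is precisely the one-dimensional space of constants, so (\ref{phi_matrix_matrix}) determines $\Phi^{m+1}$ up to an additive constant. Combining this with facts~(i)--(ii) and the first step completes the proof.

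The main obstacle I foresee is the careful summation-by-parts bookkeeping for $A(P^m+N^m)$: one must handle the staggered half-integer face values $p_{j+1/2,k}$, $n_{j,k+1/2}$ and verify, using (\ref{0_Neumann}), that every boundary contribution from the telescoping sums cancels, so that the resulting quadratic form is genuinely non-positive and vanishes only on constants. Verifying that $F$ and $A(P^m+N^m)$ commute appropriately with the symmetry used in the first display is closely related and must be done with the same care. Once these discrete integration-by-parts identities are in hand, the remaining argument is a routine spectral/linear-algebra computation.
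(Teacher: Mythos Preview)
Your proposal is correct and follows the same overall architecture as the paper's proof: reduce to showing that the coefficient matrix $M=(\tfrac{2}{\Delta t}I-F)F+A(P^m+N^m)$ of (\ref{phi_matrix_matrix}) has kernel exactly the constant grid functions, and then use positive definiteness of $\tfrac{I}{\Delta t}-\tfrac{F}{2}$ to recover $P^{m+1}$ and $N^{m+1}$ uniquely.

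Where you differ is in the mechanism used to analyze $M$. The paper argues spectrally: it observes that $F$, $A(P^m)$, $A(N^m)$ are symmetric with non-positive diagonal entries and zero row sums, invokes the Gerschgorin circle theorem to conclude they are negative semi-definite, notes that $F$ commutes with $\tfrac{2}{\Delta t}I-F$ so that their product is negative semi-definite, and then asserts (without detailed justification) that $M$ has exactly one zero eigenvalue. Your route is variational: you pair $M\Phi$ with $\Phi$, use summation by parts to write the result as a sum of three explicitly non-positive terms, and read off from the vanishing of $-\tfrac{2}{\Delta t}\|\nabla\Phi\|_h^{2}$ that any null vector must be constant. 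This is arguably cleaner on precisely the point the paper leaves vaguest, namely why $\ker M$ cannot be larger than the constants. You also make explicit something the paper omits: that $A(P^m)$ and $A(N^m)$ annihilate constants, so the additive freedom in $\Phi^{m+1}$ does not propagate to $P^{m+1}$ and $N^{m+1}$.

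One small remark: your closing comment about needing $F$ and $A(P^m+N^m)$ to ``commute appropriately'' is unnecessary for your argument. Your display only requires that $F$ be symmetric in the discrete inner product (so that $\langle F^2\Phi,\Phi\rangle_h=\|F\Phi\|_h^2$); no commutation between $F$ and $A$ enters. The commutation device belongs to the paper's spectral route, not yours.
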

\begin{proof}
From (\ref{p_discrete})-(\ref{phi_discrete}), we can see that $F$, $A(P^{m})$ and $A(N^{m})$ are all symmetric banded and have the same matrix element structure. Moreover, for each of these three matrices, the diagonal elements are negative while the sum of each row is zero, we can get all eigenvalues of each matrix are less than or equal to zero through Gerschgorin Circle Theorem \cite{Golub}. Thus all these three matrices are all negative semi-definite. Indeed, each of these three matrices has exactly one zero eigenvalue and all other negative eigenvalues. Furthermore, we can get $\frac{2I}{\Delta t}-F$ is positive definite. Since $F$ is negative semi-definite, $\frac{2I}{\Delta t}-F$ is positive definite,  and $F$  can be exchanged with $\frac{2I}{\Delta t}-F$, we have $(\frac{2I}{\Delta t}-F)F$ is negative semi-definite. And from above, we have $A(P^{m}+N^{m})$ is negative semi-definite. Thus $(\frac{2I}{\Delta t}-F)F+A(P^{m}+N^{m})$, the coefficient matrix of (\ref{phi_matrix_matrix}), is negative semi-definite. And it is easy to see that $(\frac{I}{\Delta t}-\frac{F}{2})F+A(P^{m}+N^{m})$ has exactly one zero eigenvalue and all other negative eigenvalues. Therefore within the numerical solution of $\Phi^{m+1}$ up to a constant, the numerical solution of (\ref{phi_discrete}) is unique.

Once we get $\Phi^{m+1}$, we can get $P^{m+1}$ and $N^{m+1}$ by solving (\ref{p_matrix}) and
(\ref{n_matrix}), respectively.  Since $\frac{I}{\Delta t}-\frac{F}{2}$ is positive definite, the numerical solutions of $P^{m+1}$ and $N^{m+1}$ are unique.
This completes the proof. \qed
\end{proof}

Since the coefficient matrix of (\ref{phi_matrix_matrix}) is negative semi-definite, symmetric and banded while the coefficient matrix of (\ref{p_matrix}) and (\ref{n_matrix}) is positive definite, symmetric and banded, these linear systems can be numerically solved very efficiently.

In numerical computation using (\ref{p_discrete})-(\ref{phi_discrete}), we set $\phi^{m+1}$ to be zero at one boundary point at each time step for (\ref{phi_discrete}) so that the $\phi^{m+1}$ is uniquely determined.

\subsection{Main properties of the numerical scheme}

\begin{theorem}\label{thm1}
For the solutions of (\ref{p_discrete})-(\ref{phi_discrete}), the
discrete form of mass conservation (\ref{mass}) holds, that is, for
any $m\geq0$,
\begin{equation} \label{mass_discrete}
\sum^{N_{x}}_{j=1}\sum^{N_{y}}_{k=1}p^{m+1}_{j,k}\Delta
x\Delta
y=\sum^{N_{x}}_{j=1}\sum^{N_{y}}_{k=1}p^{m}_{j,k}\Delta
x\Delta y,
\end{equation}
and
\begin{equation}
\sum^{N_{x}}_{j=1}\sum^{N_{y}}_{k=1}n^{m+1}_{j,k}\Delta
x\Delta
y=\sum^{N_{x}}_{j=1}\sum^{N_{y}}_{k=1}n^{m}_{j,k}\Delta
x\Delta y.
\end{equation}
\end{theorem}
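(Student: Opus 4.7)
The plan is to sum equation (\ref{p_discrete}), multiplied by $\Delta x\Delta y$, over all interior indices $j=1,\ldots,N_x$ and $k=1,\ldots,N_y$, and show that the two spatial contributions on the right-hand side vanish identically. Writing $M^m_p:=\sum_{j,k}p^m_{j,k}\Delta x\Delta y$, the time-difference term on the left produces $(M^{m+1}_p-M^m_p)/\Delta t$, so the whole conservation identity reduces to proving that the discrete Laplacian sum and the discrete drift-divergence sum on the right are each zero.

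First I would handle the diffusive term $\sum_{j,k}\Delta_h p^{m+1/2}_{j,k}$. For each fixed $k$, the $x$-part telescopes in the standard way into the boundary expression $[p^{m+1/2}_{N_x+1,k}-p^{m+1/2}_{N_x,k}]-[p^{m+1/2}_{1,k}-p^{m+1/2}_{0,k}]$ up to the factor $1/\Delta x$; by the ghost-cell convention (\ref{0_Neumann}) both brackets vanish. The $y$-direction argument is identical after exchanging the roles of the indices. Hence $\sum_{j,k}\Delta_h p^{m+1/2}_{j,k}=0$.

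Next I would treat the drift term $\sum_{j,k}\bigl(\delta_x(p^m_{j,k}\delta_x\phi^{m+1/2}_{j,k})+\delta_y(p^m_{j,k}\delta_y\phi^{m+1/2}_{j,k})\bigr)$. Since $\delta_x f_{j,k}=(f_{j+1/2,k}-f_{j-1/2,k})/\Delta x$, the outer $\delta_x$ is itself a discrete divergence of a cell-face flux, so the sum in $j$ telescopes to the single difference of boundary fluxes at $j=1/2$ and $j=N_x+1/2$. These boundary fluxes involve $\delta_x\phi^{m+1/2}$ evaluated across the boundary cell-face, which by (\ref{0_Neumann}) collapses to a difference of equal ghost and interior values of $\phi^{m+1/2}$ and is therefore zero. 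The $y$-direction contribution vanishes by the same reasoning. Combining with the diffusive term already treated, the entire right-hand side sums to zero, which gives (\ref{mass_discrete}).

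The conservation of $n$ is proved by the exact same argument applied to (\ref{n_discrete}); the opposite sign in front of the drift flux is irrelevant because the boundary fluxes vanish regardless of sign. The only place where some care is required is the bookkeeping step of identifying the boundary fluxes for the drift divergence, because one must verify that the ghost-cell prescription (\ref{0_Neumann}) supplies exactly the values needed to make the telescoped boundary flux zero; once this identification is made, the mass-conservation identity follows immediately with no estimates or additional hypotheses.
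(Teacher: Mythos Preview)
Your proposal is correct and follows essentially the same approach as the paper's proof: sum the scheme over all cells, telescope both the diffusive and drift terms in each coordinate direction, and kill the resulting boundary fluxes with the ghost-cell prescription (\ref{0_Neumann}). The paper's write-up is slightly more explicit in expanding the drift flux before telescoping, but the logic is identical.
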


\begin{proof}
  Multiplying $\Delta t\Delta x\Delta y$ to both sides of
(\ref{p_discrete}) and summing for $j=1\cdots N_{x}, k=1\cdots
N_{y}$, and applying the boundary conditions (\ref{0_Neumann}), we get
\begin{align}
&\sum^{N_{x}}_{j=1}\sum^{N_{y}}_{k=1}p^{m+1}_{j,k}\Delta
x\Delta
y-\sum^{N_{x}}_{j=1}\sum^{N_{y}}_{k=1}p^{m}_{j,k}\Delta
x\Delta y\nonumber\\
& =\Delta
t\Delta
x \Delta
y\sum^{N_{x}}_{j=1}\sum^{N_{y}}_{k=1}\Delta_{h}{p^{m+1/2}_{j,k}}+\Delta
t\Delta x \Delta y\sum^{N_{x}}_{j=1}\sum^{N_{y}}_{k=1}\delta_x(p^m_{j,k}\delta_x\phi^{m+1/2}_{j,k})\nonumber\\
&+\Delta t\Delta x \Delta y\sum^{N_{x}}_{j=1}\sum^{N_{y}}_{k=1}\delta_y(p^m_{j,k}\delta_y\phi^{m+1/2}_{j,k}).\label{Massid}
\end{align}
From (\ref{0_Neumann}), the first term of the right hand side of (\ref{Massid}) is zero obviously. The second term is
\begin{align}
&\Delta t\Delta x \Delta y\sum^{N_{x}}_{j=1}\sum^{N_{y}}_{k=1}\delta_x(p^m_{j,k}\delta_x\phi^{m+1/2}_{j,k})\nonumber\\
&=\frac{\Delta t \Delta y}{\Delta x}\sum^{N_{y}}_{k=1}\sum^{N_{x}}_{j=1}\left(p^{m}_{j+\frac{1}{2},k}(\phi^{m+\frac{1}{2}}_{j+1,k}-\phi^{m+\frac{1}{2}}_{j,k})-
p^{m}_{j-\frac{1}{2},k}(\phi^{m+\frac{1}{2}}_{j,k}-\phi^{m+\frac{1}{2}}_{j-1,k})\right)\nonumber\\
&=\frac{\Delta t \Delta y}{\Delta x}\sum^{N_{y}}_{k=1}\left(p^{m}_{N_{x}+\frac{1}{2},k}(\phi^{m+\frac{1}{2}}_{N_{x}+1,k}-\phi^{m+\frac{1}{2}}_{N_{x},k})-
p^{m}_{\frac{1}{2},k}(\phi^{m+\frac{1}{2}}_{1,k}-\phi^{m+\frac{1}{2}}_{0,k})\right).
\end{align}
For zero Neuman boundary conditions, we have $\phi^{m+\frac{1}{2}}_{1,k}=\phi^{m+\frac{1}{2}}_{0,k}$ and $\phi^{m+\frac{1}{2}}_{N_x+1,k}=\phi^{m+\frac{1}{2}}_{N_x,k}$. Thus the second term is zero. Similarly,  the third term is also zero. Therefore, the mass conservation identity for $p$ is proved.

Similar proof can be used for $n$. This completes the proof. \qed
\end{proof}

\begin{theorem}\label{thm3}
For the solutions of (\ref{p_discrete})-(\ref{phi_discrete}), the
discrete form of energy identity (\ref{energy}) holds, that is, for
any $m\geq0$,
\begin{align} \label{energy_discrete}
\frac{E^{m+1}-E^{m}}{\triangle
t}&=-\norm{ p^{m+1/2}-n^{m+1/2}}^{2}_{h}-\norm{\sqrt{p^{m}_{L}+n^{m}_{L}}\partial^{h}_{x}\phi^{m+1/2}}^{2}_{h}-\norm{\sqrt{p^{m}_{R}+n^{m}_{R}}\partial^{h}_{y}\phi^{m+1/2}}^{2}_{h},
\end{align}
where
$E^{m}=\frac{1}{2}\parallel\nabla\phi^{m}\parallel^{2}_{h}$,
$p^{m}_{L}$ and $p^{m}_{R}$ denotes the average value of $p^{m}$ in
x and y direction, i.e.
$$(p^{m}_{L})_{j,k}=\frac{p^{m}_{j-1,k}+p^{m}_{j,k}}{2},\quad (p^{m}_{R})_{j,k}=\frac{p^{m}_{j,k-1}+p^{m}_{j,k}}{2},$$
and similar for $n^{m}_{L}$ and $n^{m}_{R}$.
\end{theorem}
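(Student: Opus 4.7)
The plan is to mirror the continuous energy-law derivation (\ref{energy}) at the discrete level. First I would rewrite the energy increment using the telescoping identity $\tfrac12(\|\nabla\phi^{m+1}\|_h^2-\|\nabla\phi^m\|_h^2) = \langle\nabla_h\phi^{m+1/2},\nabla_h(\phi^{m+1}-\phi^m)\rangle_h$ and then apply the discrete Green's identity. The zero-Neumann ghost-cell conditions (\ref{0_Neumann}) make the edge-boundary contributions vanish, giving $E^{m+1}-E^m = -\langle\phi^{m+1/2},\Delta_h(\phi^{m+1}-\phi^m)\rangle_h$. The Poisson equation (\ref{phi_discrete}) taken at levels $m$ and $m+1$ then lets me replace $-\Delta_h(\phi^{m+1}-\phi^m)$ by $(p^{m+1}-n^{m+1})-(p^m-n^m)$, so that
\begin{equation*}
\frac{E^{m+1}-E^m}{\Delta t} = \left\langle \phi^{m+1/2},\, \frac{p^{m+1}-p^m}{\Delta t} - \frac{n^{m+1}-n^m}{\Delta t}\right\rangle_h.
\end{equation*}

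Next I would substitute the discrete evolution equations (\ref{p_discrete})--(\ref{n_discrete}) for the two time-difference quotients. Subtracting the $n$-equation from the $p$-equation collapses the two drift contributions into a single combined flux with coefficient $p^m+n^m$, and leaves $\Delta_h(p^{m+1/2}-n^{m+1/2})$ from the diffusion pieces. The inner product therefore splits into three parts. For the diffusion part, two applications of discrete summation by parts (with vanishing Neumann boundary contributions thanks to (\ref{0_Neumann}) applied to both $\phi$ and $p,n$) move $\Delta_h$ onto $\phi^{m+1/2}$; the time-averaged Poisson identity $-\Delta_h\phi^{m+1/2}=p^{m+1/2}-n^{m+1/2}$, obtained by averaging (\ref{phi_discrete}) at levels $m$ and $m+1$, then converts this piece into exactly $-\|p^{m+1/2}-n^{m+1/2}\|_h^2$.

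For the two drift pieces I would perform Abel summation in the $x$- and $y$-indices separately. Writing the outer $\delta_x$ as a difference of edge-centered fluxes of the form $(p^m+n^m)_{j+1/2,k}(\phi^{m+1/2}_{j+1,k}-\phi^{m+1/2}_{j,k})/\Delta x$, the Neumann ghost-cell conditions make both the $j=\tfrac12$ and $j=N_x+\tfrac12$ fluxes vanish (the $\phi$-differences are zero there), so the boundary terms drop out. The telescoped interior sum is precisely $-\|\sqrt{p^m_L+n^m_L}\,\partial^h_x\phi^{m+1/2}\|_h^2$ after reindexing the half-integer subscripts to cell centers and absorbing the index $j=1$ via $(\partial^h_x\phi^{m+1/2})_{1,k}=0$. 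An identical argument in the $y$-direction produces the $-\|\sqrt{p^m_R+n^m_R}\,\partial^h_y\phi^{m+1/2}\|_h^2$ contribution, completing (\ref{energy_discrete}).

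The main obstacle I anticipate is the careful index bookkeeping between the edge-centered flux quantities naturally produced by summation by parts and the cell-centered norms stated in the theorem, together with verifying that every boundary contribution from the repeated summations by parts vanishes under the ghost-cell rules (\ref{0_Neumann}) — in particular, that the two-step application of discrete Green's identity needed for the $\Delta_h(p^{m+1/2}-n^{m+1/2})$ term incurs no residual edge contributions from $p$ or $n$ either. Once this bookkeeping is made consistent, the three dissipation terms fall out cleanly and match the right-hand side of (\ref{energy_discrete}).
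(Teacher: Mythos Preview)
Your proposal is correct and follows essentially the same route as the paper: both arguments link $\frac{E^{m+1}-E^m}{\Delta t}$ to $\langle\phi^{m+1/2},\frac{p^{m+1}-p^m}{\Delta t}-\frac{n^{m+1}-n^m}{\Delta t}\rangle_h$ via the differenced Poisson equation and summation by parts, substitute (\ref{p_discrete})--(\ref{n_discrete}), apply discrete Green/Abel summation to the diffusion and drift pieces under the ghost-cell conditions (\ref{0_Neumann}), and finally invoke the time-averaged Poisson relation $-\Delta_h\phi^{m+1/2}=p^{m+1/2}-n^{m+1/2}$ to produce $-\|p^{m+1/2}-n^{m+1/2}\|_h^2$. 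The only cosmetic difference is that the paper keeps the $p$- and $n$-drift contributions separate before combining, whereas you merge them into a single $(p^m+n^m)$ flux earlier.
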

{\bf Proof.} Equation (\ref{phi_discrete}) of time level $m$ minus equation
(\ref{phi_discrete}) of time level $m-1$ gives,
\begin{equation} \label{energy_discrete2}
-\triangle_{h}(\phi^{m+1}_{j,k}-\phi^{m}_{j,k})=(p^{m+1}_{j,k}-p^{m}_{j,k})-(n^{m+1}_{j,k}-n^{m}_{j,k}).
\end{equation}
Multiplying above equation with $\frac{1}{\triangle
t}\phi^{m+1/2}_{j,k}\triangle x\triangle y$ to both
sides and summing for $j=1\cdots N_{x}, k=1\cdots N_{y}$, and
applying the boundary conditions (\ref{0_Neumann}), we get
\begin{align}
&<\frac{p^{m+1}-p^{m}}{\triangle t},{\phi^{m+1/2}}>_{h}-<\frac{n^{m+1}-n^{m}}{\triangle
t},{\phi^{m+1/2}}>_{h}\nonumber\\
&=-\frac{1}{2\triangle t}\sum^{N_{x}}_{j
=1}\sum^{N_{y}}_{k=1}\triangle_{h}(\phi^{m+1}_{j,k}-\phi^{m}_{j,k})(\phi^{m+1}_{j,k}+\phi^{m}_{j,k})\triangle
x\triangle y\nonumber\\
&=\frac{1}{2\triangle
t}\sum^{N_{x}}_{j=1}\sum^{N_{y}}_{k=1}\nabla_{h}(\phi^{m+1}_{j,k}-\phi^{m}_{j,k})\cdot\nabla_{h}(\phi^{m+1}_{j,k}+\phi^{m}_{j,k})\triangle
x\triangle y\nonumber\\
&=\frac{1}{2\triangle
t}\sum^{N_{x}}_{j=1}\sum^{N_{y}}_{k=1}|\nabla_{h}\phi^{m+1}_{j,k}|^{2}-|\nabla_{h}\phi^{m}_{j,k}|^{2}\triangle
x\triangle y\nonumber\\
&=\frac{E^{m+1}-E^{m}}{\triangle t}.
\end{align}

Substituting (\ref{p_discrete}) and (\ref{n_discrete}) into above
equation, we get
\begin{align}\label{E_discrete}
\frac{E^{m+1}-E^{m}}{\triangle
t}&=\left<\triangle_{h}p^{m+1/2}+\left(\delta_x\left(p^m\delta_x\phi^{m+1/2}\right)+\delta_y\left(p^m\delta_y\phi^{m+1/2}\right)\right),\phi^{m+1/2}\right>_{h}\nonumber\\
&\;\;\;\;-\left<\triangle_{h}n^{m+1/2}-\left(\delta_x\left(n^m\delta_x\phi^{m+1/2}\right)+\delta_y\left(n^m\delta_y\phi^{m+1/2}\right)\right),\phi^{m+1/2}\right>_{h}\nonumber\\
&=\left(\left<p^{m+1/2},\triangle_{h}\phi^{m+1/2}\right>_{h}-\norm{\sqrt{p^{m}_{L}}\partial^{h}_{x}\phi^{m+1/2}}^{2}_{h}-\norm{\sqrt{p^{m}_{R}}\partial^{h}_{y}\phi^{m+1/2}}^{2}_{h}\right)-\nonumber\\
&\;\;\;\;\left(\left<n^{m+1/2},\triangle_{h}\phi^{m+1/2}\right>_{h}+\norm{\sqrt{n^{m}_{L}}\partial^{h}_{x}\phi^{m+1/2}}^{2}_{h}+\norm{\sqrt{n^{m}_{R}}\partial^{h}_{y}\phi^{m+1/2}}^{2}_{h}\right)\nonumber\\
&=\left<p^{m+1/2}-n^{m+1/2},\triangle_{h}\phi^{m+1/2}\right>_{h}-\norm{\sqrt{p^{m}_{L}+n^{m}_{L}}\partial^{h}_{x}\phi^{m+1/2}}^{2}_{h}-\norm{\sqrt{p^{m}_{R}+n^{m}_{R}}\partial^{h}_{y}\phi^{m+1/2}}^{2}_{h}.
\end{align}
Here we have used
\begin{align}\label{E_discrete1}
\left<\triangle_{h}p^{m+1/2},\phi^{m+1/2}\right>_{h}&=-\left<\nabla_{h}p^{m+1/2},\nabla_{h}\phi^{m+1/2}\right>_{h}=\left<p^{m+1/2},\triangle_{h}\phi^{m+1/2}\right>_{h},\\
\left<\triangle_{h}n^{m+1/2},\phi^{m+1/2}\right>_{h}&=-\left<\nabla_{h}n^{m+1/2},\nabla_{h}\phi^{m+1/2}\right>_{h}=\left<n^{m+1/2},\triangle_{h}\phi^{m+1/2}\right>_{h},
\end{align}
and
\begin{align}\label{E_discrete21}
\left<\delta_x\left(p^m\delta_x\phi^{m+1/2}\right),\phi^{m+1/2}\right>_{h}=-\norm{\sqrt{p^{m}_{L}}\partial^{h}_{x}\phi^{m+1/2}}^{2}_{h},
\end{align}

\begin{align}\label{E_discrete22}
\left<\delta_y\left(p^m\delta_y\phi^{m+1/2}\right),\phi^{m+1/2}\right>_{h}=-\norm{\sqrt{p^{m}_{R}}\partial^{h}_{y}\phi^{m+1/2}}^{2}_{h},
\end{align}

\begin{align}\label{E_discrete23}
\left<\delta_x\left(n^m\delta_x\phi^{m+1/2}\right),\phi^{m+1/2}\right>_{h}=-\norm{\sqrt{n^{m}_{L}}\partial^{h}_{x}\phi^{m+1/2}}^{2}_{h},
\end{align}

\begin{align}\label{E_discrete33}
\left<\delta_y\left(n^m\delta_y\phi^{m+1/2}\right),\phi^{m+1/2}\right>_{h}=-\norm{\sqrt{n^{m}_{R}}\partial^{h}_{y}\phi^{m+1/2}}^{2}_{h}.
\end{align}
Equations (\ref{E_discrete1})-(\ref{E_discrete33}) can be easily
checked when applying the boundary conditions (\ref{0_Neumann}).

Equation (\ref{phi_discrete}) of time level $m$ plus equation
(\ref{phi_discrete}) of time level  $m-1$ gives,
\begin{equation} \label{discrete}
-\triangle_{h}(\phi^{m+1}_{j,k}+\phi^{m}_{j,k})=(p^{m+1}_{j,k}+p^{m}_{j,k})-(n^{m+1}_{j,k}+n^{m}_{j,k}).
\end{equation}
Substituting (\ref{discrete}) into (\ref{E_discrete}), we get equation (\ref{energy_discrete}).
This completes the proof of Theorem \ref{thm3}.

\section{Extending the method to the Poisson-Nernst-Planck system with multi-ions}\label{sec:4}
In this section, we shall extend the above method to the case of
multi-ions. The model equations are as follows,
\begin{equation}\label{c_i}
(c_{i})_{t}=\nabla\cdot(\nabla c_{i}+z_{i}c_{i}\nabla\phi), \
\textrm{in}\ \Omega,
\end{equation}
\begin{equation}\label{potentialm}
-\Delta\phi=\sum_{i}z_{i}c_{i}, \ \textrm{in}\ \Omega,
\end{equation}
where $c_{i}$ is a ion with valence $z_{i}$. Using a similar method
as in the introduction part, it is easy to check the above system
satisfy the following energy and mass identities under zero Neumann
boundary conditions,
\begin{equation}\label{energym}
\frac{dE}{dt}=-\int_{\Omega}\Big(\big\vert\sum_{i}z_{i}c_{i}\big\vert^{2}+(\sum_{i}z^{2}_{i}c_{i})|\nabla\phi|^{2}\Big)
d\vec{x},
\end{equation}
\begin{equation} \label{massm}
\frac{d}{dt}\int_{\Omega}c_{i}(t,\vec{x})d\vec{x}=0.
\end{equation}

The scheme for the above Poisson-Nernst-Planck system is as follows
\begin{align}
\frac{(c_{i})^{m+1}_{j,k}-(c_{i})^{m}_{j,k}}{\Delta
t}&=\Delta_{h}(c_{i})^{m+1/2}_{j,k}+z_{i}\left(\delta_x\left((c_i)^{m}_{j,k}\delta_x\phi^{m+1/2}_{j,k}\right)+\delta_y\left((c_i)^{m}_{j,k}\delta_y\phi^{m+1/2}_{j,k}\right)\right),
\label{p_discretm}
\\
-\Delta_{h}\phi^{m+1}_{j,k}&=\sum_{i}z_{i}(c_{i})^{m+1}_{j,k},\label{phi_discretem}
\end{align}
where $$\phi^{m+1/2}_{j,k}=\frac{\phi^{m}_{j,k}+\phi^{m+1}_{j,k}}{2},\quad (c_{i})^{m+1/2}_{j,k}=\frac{(c_{i})^{m+1}_{j,k}+(c_{i})^{m}_{j,k}}{2}.$$

The matrix and vector form of scheme
(\ref{p_discretm})-(\ref{phi_discretem}), is as follows,
\begin{align} \label{p_matrixm}
(\frac{I}{\Delta t}-\frac{F}{2})C^{m+1}_{i}=(\frac{I}{\Delta
t}+\frac{F}{2})C^{m}_{i}+z_{i}A(C^{m}_{i})\frac{\Phi^{m+1}+\Phi^{m}}{2},
\end{align}
\begin{equation} \label{phi_matrixm}
F\Phi^{m+1}=\sum_{i}z_{i}C^{m+1}_{i}.
\end{equation}
(\ref{p_matrixm}) and (\ref{phi_matrixm}) are linear system of
$C^{m+1}_{i}$ and $\Phi^{m+1}$, which can be solved efficiently,
since all these matrices are symmetric and banded.

Using similar techniques as in Theorem \ref{thm1} and \ref{thm3}, we could also
prove that the above numerical scheme satisfies  the mass conservation and energy decay properties, which is stated in the following theorem.\\

\begin{theorem}
For the solutions of (\ref{p_discretm})-(\ref{phi_discretem}), the
discrete form of mass conservation (\ref{massm}) and energy identity
(\ref{energym}) holds, that is, for any $m\geq0$,
\begin{equation} \label{mass_discretem}
\sum^{N_{x}}_{j=1}\sum^{N_{y}}_{k=1}(c_{i})^{m+1}_{j,k}\Delta
x\Delta
y=\sum^{N_{x}}_{j=1}\sum^{N_{y}}_{k=1}(c_{i})^{m}_{j,k}\Delta
x\Delta y.
\end{equation}
Moreover, the following energy identity is preserved:
\begin{align} \label{energy_discretem}
\frac{E^{m+1}-E^{m}}{\Delta
t}=&-\norm{\frac{\sum_{i}z_{i}c^{m}_{i}+\sum_{i}z_{i}c^{m+1}_{i}}{2}}^{2}_{h}-\norm{\sqrt{\sum_{i}z^{2}_{i}c^{m}_{iL}}{\partial^{h}_{x}\phi^{m+1/2}}}^{2}_{h}\nonumber\\
&-\norm{\sqrt{\sum_{i}z^{2}_{i}c^{m}_{iR}}{\partial^{h}_{y}\phi^{m+1/2}}}^{2}_{h},
\end{align}
where $E^{m}=\frac{1}{2}\norm{\nabla\phi^{m}}^{2}_{h}$, $c^{m}_{iL}$ and $c^{m}_{iR}$ denotes the average value of
$c^{m}_{i}$ in $x$ and $y$ directions, i.e.
$$(c^{m}_{iL})_{j,k}=\frac{(c_{i})^{m}_{j-1,k}+(c_{i})^{m}_{j,k}}{2},\;\;\; (c^{m}_{iR})_{j,k}=\frac{(c_{i})^{m}_{j,k-1}+(c_{i})^{m}_{j,k}}{2}.$$
\end{theorem}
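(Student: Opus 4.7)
The plan is to establish the two discrete identities by the same two-step strategy used in Theorem \ref{thm1} and Theorem \ref{thm3}, extended species by species and then summed with the valence weights $z_i$. Mass conservation requires only a discrete divergence argument on each species separately, while the energy identity requires a summation-by-parts manipulation combined with the subtraction/addition of the discrete Poisson equation at two consecutive time levels.

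For the mass conservation (\ref{mass_discretem}), I would multiply (\ref{p_discretm}) by $\Delta t\,\Delta x\,\Delta y$ and sum over $j=1,\dots,N_x$ and $k=1,\dots,N_y$. The term $\Delta_h (c_i)^{m+1/2}$ telescopes to boundary flux differences, all of which vanish under the zero Neumann conditions (\ref{0_Neumann}) applied to $c_i$; the two terms of the form $\delta_x((c_i)^m \delta_x \phi^{m+1/2})$ and $\delta_y((c_i)^m \delta_y \phi^{m+1/2})$ likewise telescope and vanish because of the zero Neumann conditions applied to $\phi$. Since each species equation has exactly the same structure as (\ref{p_discrete}), the argument is a direct transcription of the proof of Theorem \ref{thm1} and requires no new idea.

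For the energy identity (\ref{energy_discretem}), I would first subtract (\ref{phi_discretem}) at time level $m-1$ from time level $m$ to obtain
\begin{equation*}
-\Delta_h(\phi^{m+1}_{j,k}-\phi^{m}_{j,k}) = \sum_i z_i\bigl((c_i)^{m+1}_{j,k}-(c_i)^{m}_{j,k}\bigr).
\end{equation*}
Pairing this with $\tfrac{1}{\Delta t}\phi^{m+1/2}_{j,k}$ in the discrete $L^2$ inner product and applying the discrete Green identity (as in the proof of Theorem \ref{thm3}) yields the left-hand side $(E^{m+1}-E^m)/\Delta t$. I would then substitute, for each $i$, the evolution equation (\ref{p_discretm}) multiplied by $z_i$ into the resulting inner product. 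Using the summation-by-parts identities
\begin{equation*}
\langle \Delta_h (c_i)^{m+1/2}, \phi^{m+1/2}\rangle_h = \langle (c_i)^{m+1/2}, \Delta_h \phi^{m+1/2}\rangle_h,
\end{equation*}
\begin{equation*}
\bigl\langle \delta_x((c_i)^m \delta_x \phi^{m+1/2}),\phi^{m+1/2}\bigr\rangle_h = -\norm{\sqrt{(c_i)^{m}_{L}}\,\partial^h_x \phi^{m+1/2}}^2_h,
\end{equation*}
and the analogous identity in $y$, I can rewrite each species contribution as a term involving $\langle (c_i)^{m+1/2}, \Delta_h\phi^{m+1/2}\rangle_h$ plus the two negative squared-norm dissipation terms weighted by $z_i^2 (c_i)^m_L$ and $z_i^2 (c_i)^m_R$ respectively. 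Summing over $i$ assembles the coefficient $\sum_i z_i^2 (c_i)^m_L$ in front of $(\partial^h_x \phi^{m+1/2})^2$ and likewise in $y$, producing the last two terms of (\ref{energy_discretem}).

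Finally, I would collapse the remaining $\sum_i z_i \langle (c_i)^{m+1/2},\Delta_h \phi^{m+1/2}\rangle_h$ term by using the sum of (\ref{phi_discretem}) at levels $m$ and $m-1$, namely $-\Delta_h(\phi^{m+1}+\phi^m) = \sum_i z_i((c_i)^{m+1}+(c_i)^m)$, exactly as in the last step of Theorem \ref{thm3}; this replaces the Laplacian factor by $-\sum_i z_i (c_i)^{m+1/2}$ times two and produces the squared-norm $-\norm{\tfrac{1}{2}\sum_i z_i((c_i)^m+(c_i)^{m+1})}^2_h$. I expect the main obstacle to be purely bookkeeping: tracking the $z_i$ factors carefully so that the single power $z_i$ from the Poisson equation combines with the $z_i$ already present in the drift term of (\ref{p_discretm}) to produce the $z_i^2$ weights in the dissipation norms, while the mixed cross terms $z_i z_j (c_i)^{m+1/2}$ are not generated because each species is coupled to $\phi$ only through its own drift.
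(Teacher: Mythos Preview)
Your proposal is correct and follows exactly the approach the paper indicates: the paper does not give a separate detailed proof of this multi-ion theorem but simply states that the techniques of Theorem~\ref{thm1} and Theorem~\ref{thm3} carry over, which is precisely what you outline species by species with the appropriate $z_i$ weights. Your bookkeeping of how one factor of $z_i$ arises from pairing with the Poisson difference and the other from the drift term in (\ref{p_discretm}), combining into the $z_i^2$ weights in the dissipation norms, is accurate and is the only point requiring any care beyond the two-ion case.
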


\section{Numerical results}\label{sec:5}

For the sake of simplicity, we only give examples for the
Poisson-Nernst-Planck system with two ions, i.e.
(\ref{potential}).

From the numerical scheme (\ref{p_discrete})-(\ref{phi_discrete}), it is easy to check that the truncation error of (\ref{p_discrete}) and (\ref{n_discrete}) are $O(\Delta t+ (\Delta x)^2+ (\Delta y)^2)$, while (\ref{phi_discrete}) has the truncation error $O( (\Delta x)^2+ (\Delta y)^2)$. Thus the numerical scheme is expected to convergent with first order in time and second order in space.

{\bf Example 1}. Since it is not possible to find  the exact solutions for the equations
(\ref{potential}), we are now use the following argumented equations with exact solutions as a test problem:
\begin{equation}\label{p1}
p_{t}=\nabla\cdot(\nabla p+p\nabla\phi)+f_1, \ \textrm{in}\  \Omega_T=[0,T]\times\Omega,
\end{equation}
\begin{equation}\label{n1}
n_{t}=\nabla\cdot(\nabla n-n\nabla\phi)+f_2, \ \textrm{in}\ \Omega_T=[0,T]\times\Omega,
\end{equation}
\begin{equation}\label{potential1}
-\Delta\phi=p-n+c, \ \textrm{in}\ \Omega_T=[0,T]\times\Omega,
\end{equation}
where
\begin{equation*}
 p=(3x^2-2x^3+3y^2-2y^3)e^{-t}, n=(x^2(1-x)^2+y^2(1-y)^2)e^{-t}, \phi=x^2(1-x)^2y^2(1-y)^2e^{-t}
\end{equation*}
 are the exact solutions of (\ref{p1})-(\ref{potential1}), which satisfy the zero Neumann boundary conditions (\ref{boundary}). And $f_1$, $f_2$, $c$ are known functions which are given according to these exact solutions.

We do the discretization of equations (\ref{p1})-(\ref{potential1}) as follows:

\begin{align} \label{p_discrete1}
\frac{p^{m+1}_{j,k}-p^{m}_{j,k}}{\Delta
t}=\Delta_{h}p^{m+1/2}_{j,k}+\left(\delta_x(p^m_{j,k}\delta_x\phi^{m+1/2}_{j,k})+\delta_y(p^m_{j,k}\delta_y\phi^{m+1/2}_{j,k})\right)+(f_1)^{m}_{j,k},
\end{align}
\begin{align} \label{n_discrete1}
\frac{n^{m+1}_{j,k}-n^{m}_{j,k}}{\Delta
t}=\Delta_{h}n^{m+1/2}_{j,k}-\left(\delta_x(n^m_{j,k}\delta_x\phi^{m+1/2}_{j,k})+\delta_y(n^m_{j,k}\delta_y\phi^{m+1/2}_{j,k})\right)+(f_2)^{m}_{j,k},
\end{align}
\begin{equation} \label{phi_discrete1}
-\Delta_{h}\phi^{m+1}_{j,k}=p^{m+1}_{j,k}-n^{m+1}_{j,k}+c^{m+1}_{j,k}.
\end{equation}
In the numerical computation, since the electric potential $\phi$ is not unique up to a constant, we set electric potential at the first point to be the exact value at each time step in order to get unique solutions.
It is easy to check that  the truncation error of the above discretization scheme for the system (\ref{potential1}) has  the same order as  the truncation error  of  the discretization scheme (\ref{p_discrete})-(\ref{phi_discrete}) for the problem (\ref{potential}). For this example, when numerically implementing of (\ref{p_discrete1})-(\ref{phi_discrete1}), we set $\phi^{m+1}$ at one boundary point to be the exact value at each time step so that $\phi^{m+1}$ is uniquely determined.

Now we  carry
out the numerical convergence study for both space and time using (\ref{p_discrete1})-(\ref{phi_discrete1}). For spatial convergence,
 we set  $\Delta t=0.000002$, and use 4 different spatial meshes  $\Delta x=\Delta y=\frac{1}{20\times 2^{n}}\triangleq h$, $n=0,\cdots,3$, the final time is set to be $T=1.0$.
 When $\Delta t$ is sufficiently small, we compute the spatial
convergence order according to
\begin{equation} \label{order1}
\textrm{order1}=\log_2\frac{||u_{h}(\cdot,\cdot,T)-u_{exact}(\cdot,\cdot,T)||}{||u_{\frac{h}{2}}(\cdot,\cdot,T)-u_{exact}(\cdot,\cdot,T)||},
\end{equation}
where $u_{h}(\cdot,\cdot,T)$ is the numerical solution at time $t=T$ using mesh $h$, $u_{exact}(\cdot,\cdot,T)$
is the exact solution at time $t=T$, and $||\cdot||$ is the spatial discrete norm.
Table \ref{table1} shows the mesh refinement analysis for $p,n,\phi$
using two different norms. One can
see, the errors are decreasing when spatial mesh is refined, and it
is second order convergent for both norms, which is expected from the truncation error analysis.

\begin{table}[htbp]
\tabcolsep=4pt
\begin{center}
\caption{Spatial mesh refinement analysis for the Poisson-Nernst-Planck
system with zero Neumann boundary conditions ($e_p=p_h-p_{exact}, e_n=n_h-n_{exact}, e_{\phi}=\phi_h-\phi_{exact}, \Delta t=0.000002$).}\label{table1}
\begin{tabular}{lcccccccccccc}
\hline\noalign{\smallskip}
$h$&$\norm{e_p}_{2}$&order1&$\norm{
e_p}_{\infty}$&order1&$\norm{
e_n}_{2}$&order1&$\norm{
e_n}_{\infty}$&order1&$\norm{
e_{\phi}}_{2}$&order1&$\norm{
e_{\phi}}_{\infty}$&order1\\
\noalign{\smallskip}\hline\noalign{\smallskip}
 ${1}/{20} $&3.48e-4 &      -&7.65e-4&   -&3.16e-3&   -& 3.24e-3&    -&5.19e-3&   -&5.74e-3&   -\\
 ${1}/{40} $&8.71e-5 & 2.00  &1.94e-4&1.98&7.89e-4&2.00& 8.09e-4& 2.00&1.63e-3&1.67&1.77e-3&1.70\\
 ${1}/{80} $&2.17e-5 & 2.00  &4.91e-5&1.98&1.97e-4&2.00& 2.02e-4& 2.00&4.87e-4&1.74&5.23e-4&1.76\\
 ${1}/{160}$&5.42e-6 & 2.00  &1.25e-5&1.97&4.94e-5&2.00& 5.06e-5& 2.00&1.40e-4&1.80&1.49e-4&1.81\\
\noalign{\smallskip}\hline
\end{tabular}
\end{center}
\end{table}

\begin{table}[htbp]
\tabcolsep=4pt
\centering
\caption{Temporal mesh refinement analysis with $h=1/640$ for the Poisson-Nernst-Planck
system with zero Neumann boundary conditions.}\label{table2}
\begin{tabular}{lcccccccccccc}
\hline\noalign{\smallskip}
 $\Delta t$&$\norm{e_p}_{2}$&order2&$\norm{
e_p}_{\infty}$&order2&$\norm{
e_n}_{2}$&order2&$\norm{
e_n}_{\infty}$&order2&$\norm{
e_{\phi}}_{2}$&order2&$\norm{
e_{\phi}}_{\infty}$&order2\\
\noalign{\smallskip}\hline\noalign{\smallskip}
 ${1}/{40} $ &8.26e-3 &    -&1.31e-2&   -&5.46e-4&   -& 8.61e-4 &    -&6.51e-3&    -&7.40e-3&-   \\
 ${1}/{80} $ &4.14e-3 & 1.00&6.58e-3&1.00&2.75e-4&0.99& 4.33e-4 & 0.99&4.05e-3&0.69 &4.52e-3&0.71\\
 ${1}/{160}$ &2.07e-3 & 1.00&3.29e-3&1.00&1.39e-4&0.98& 2.18e-4 & 0.99&2.42e-3&0.74 &2.67e-3&0.76\\
 ${1}/{320}$ &1.04e-3 & 1.00&1.65e-3&1.00&7.11e-5&0.97& 1.11e-4 & 0.98&1.41e-3&0.78 &1.53e-3&0.80\\
\noalign{\smallskip}\hline
\end{tabular}
\end{table}

 For time convergence,
we set $\Delta x=\Delta y=h=1/640$, and use 4 different time steps
$\Delta t=\frac{1}{40\times 2^{n}}$, $n=0,\cdots,3$, the final time
is set to be $T=1.0$. When $h$ is sufficiently small, we compute the temporal
convergence order according to
\begin{equation} \label{order2}
\textrm{order2}=\log_2\frac{||u_{\Delta t}(\cdot,\cdot,T)-u_{exact}(\cdot,\cdot,T)||}{||u_{{\Delta t}/{2}}(\cdot,\cdot,T)-u_{exact}(\cdot,\cdot,T)||},
\end{equation}
where $u_{\Delta t}(\cdot,\cdot,T)$ is the numerical solution at time $t=T$ using the time step $\Delta t$.
Table \ref{table2} shows the
time step refinement analysis for $p,n,\phi$
using two different norms. One can
see, the errors are decreasing when time step is refined, and it
is first order convergent in time,  which also is expected from the truncation error analysis.

\begin{figure}[!hbtp]
\tabcolsep=4pt
 \centering
 \subfigure[electric potential energy $E$ w.r.t. time]{
\label{fig:subfig1:1} 
\includegraphics[scale=0.4]{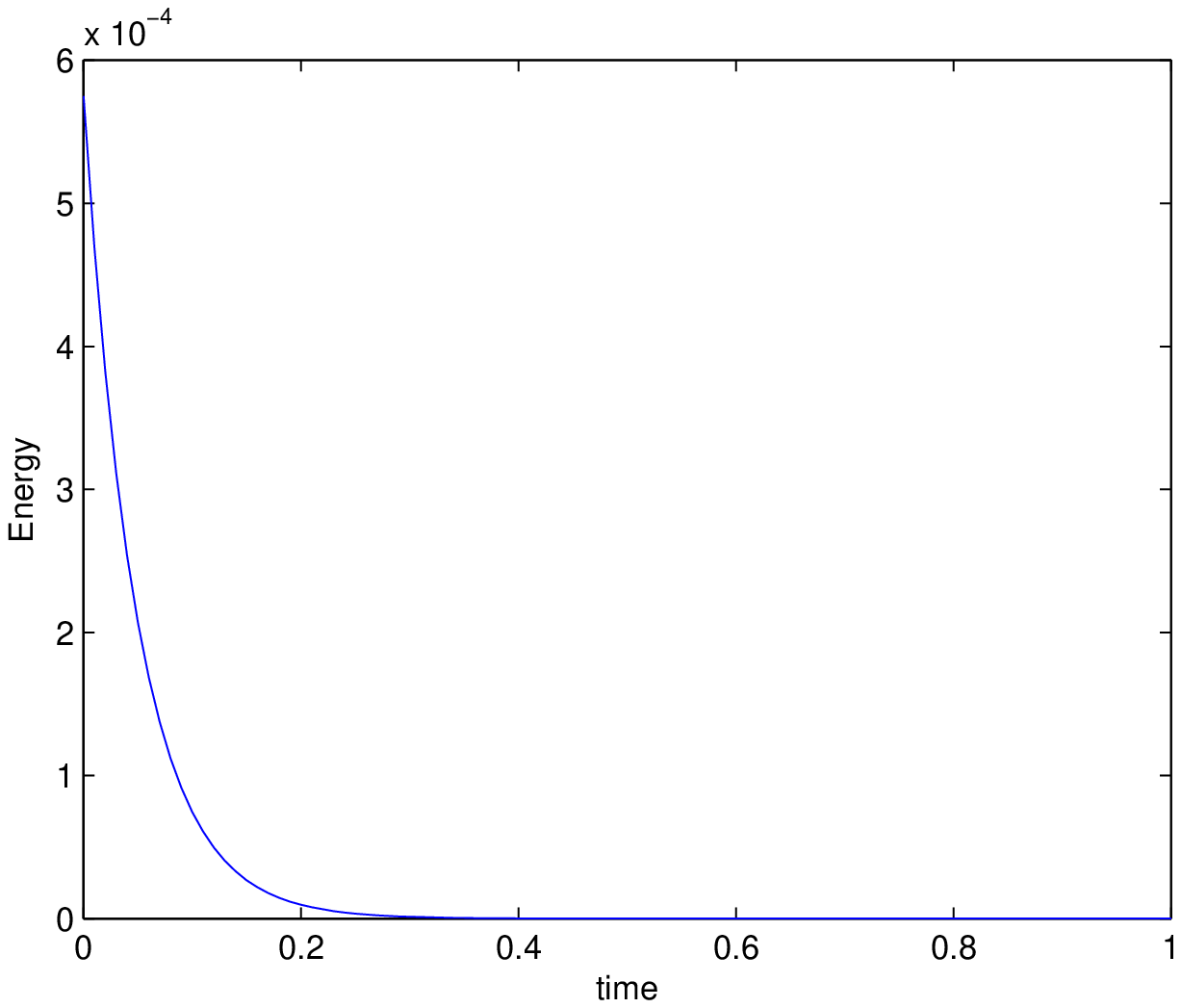}}
\hspace{0.1in}
\subfigure[mass of $p$ w.r.t. time]{
\label{fig:subfig1:2} 
\includegraphics[scale=0.4]{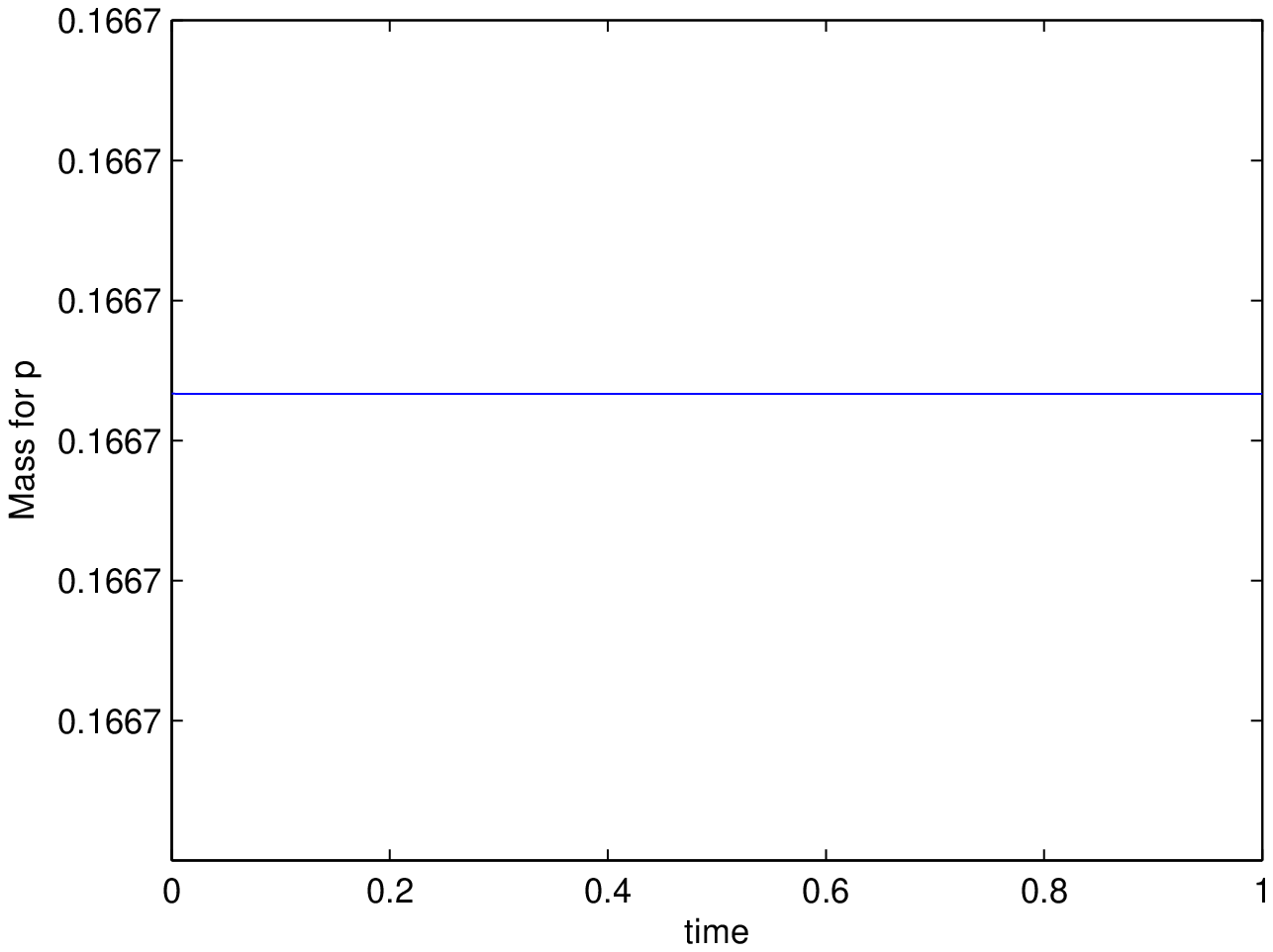}}\\
\subfigure[mass of $n$ w.r.t. time]{
\label{fig:subfig1:3} 
\includegraphics[scale=0.4]{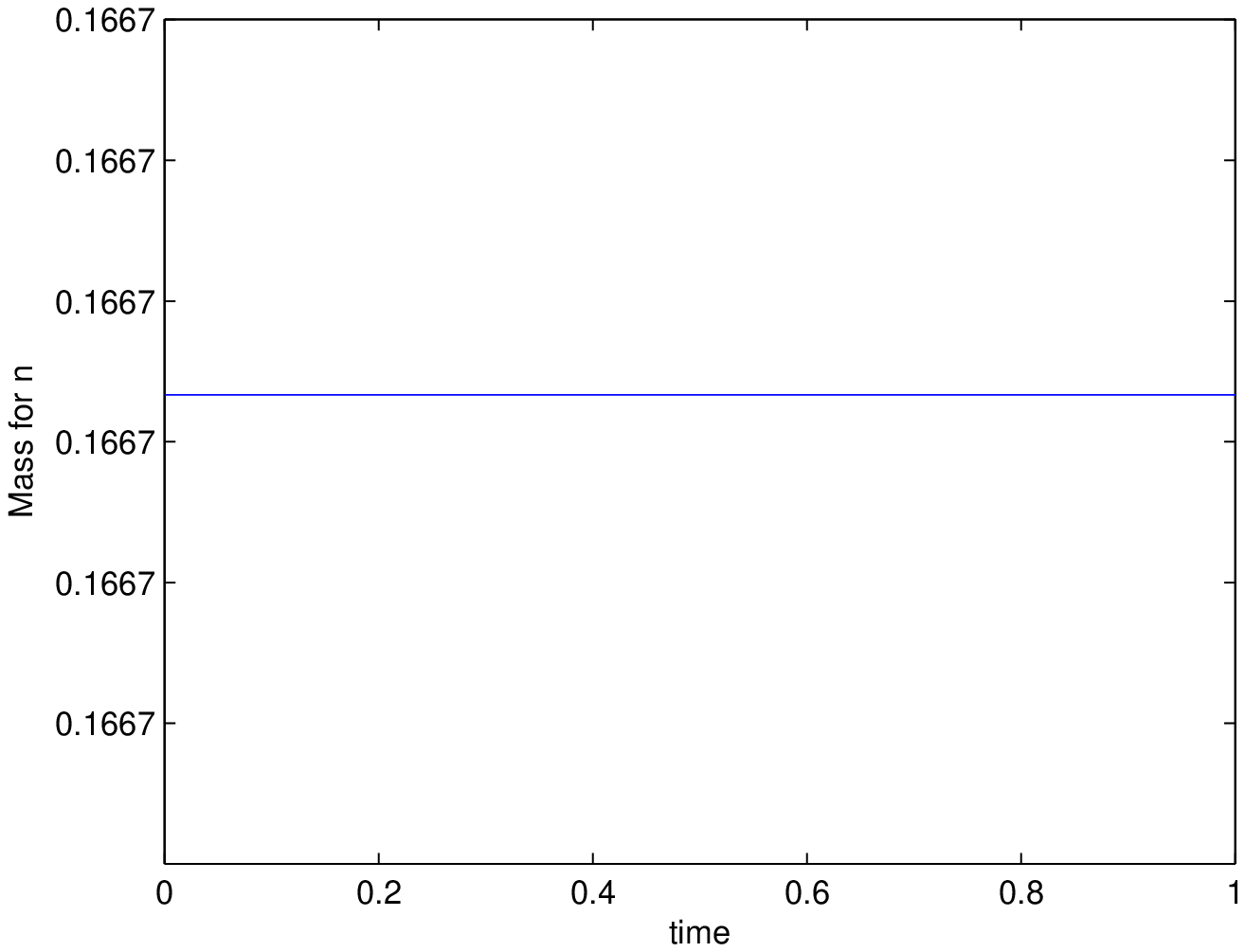}}
\hspace{0.1in}
\subfigure[ relative mass error of $p$ w.r.t. time]{
\label{fig:subfig1:4} 
\includegraphics[scale=0.4]{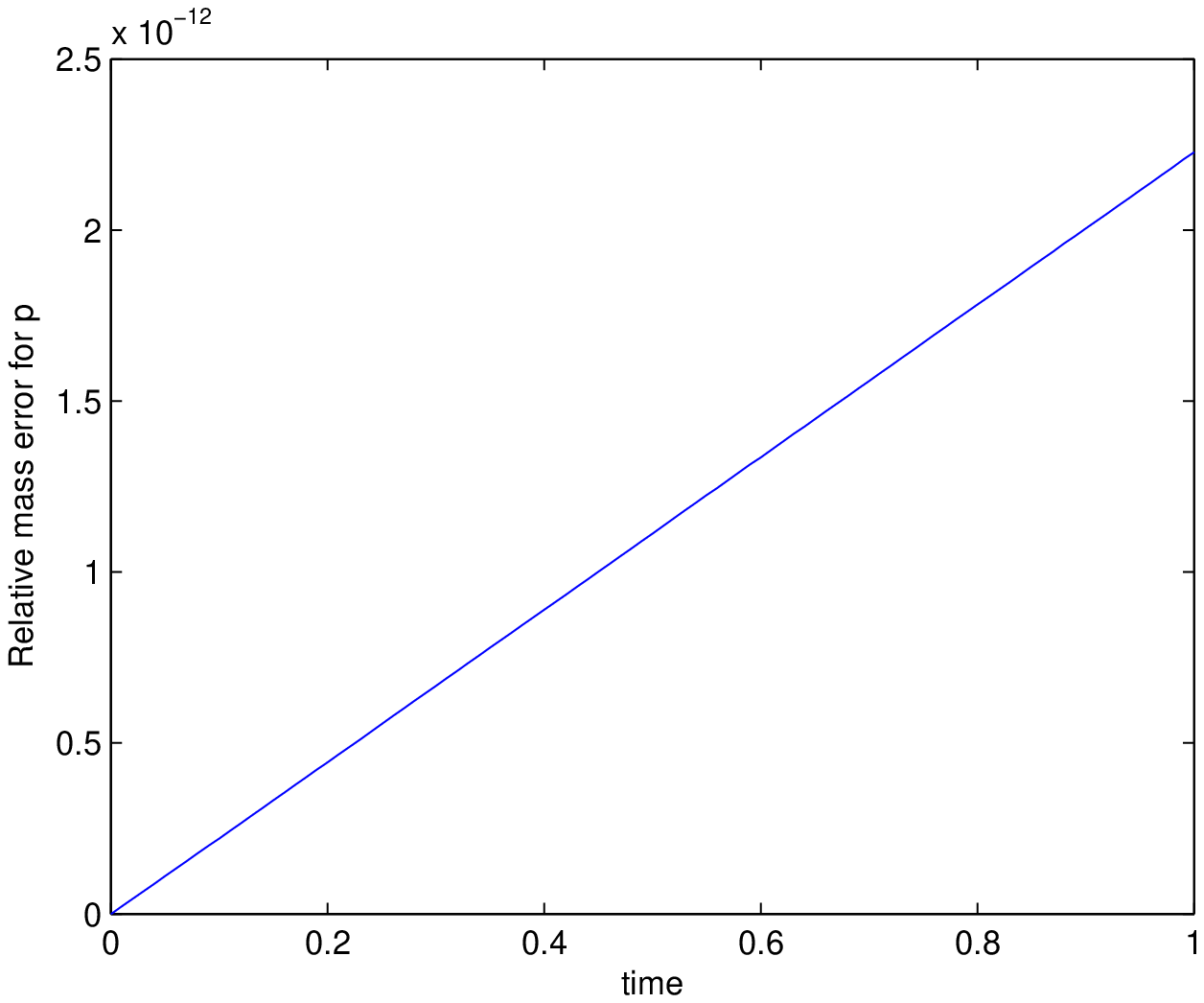}}\\
\subfigure[relative mass error of $n$ w.r.t. time]{
\label{fig:subfig1:5} 
\includegraphics[scale=0.4]{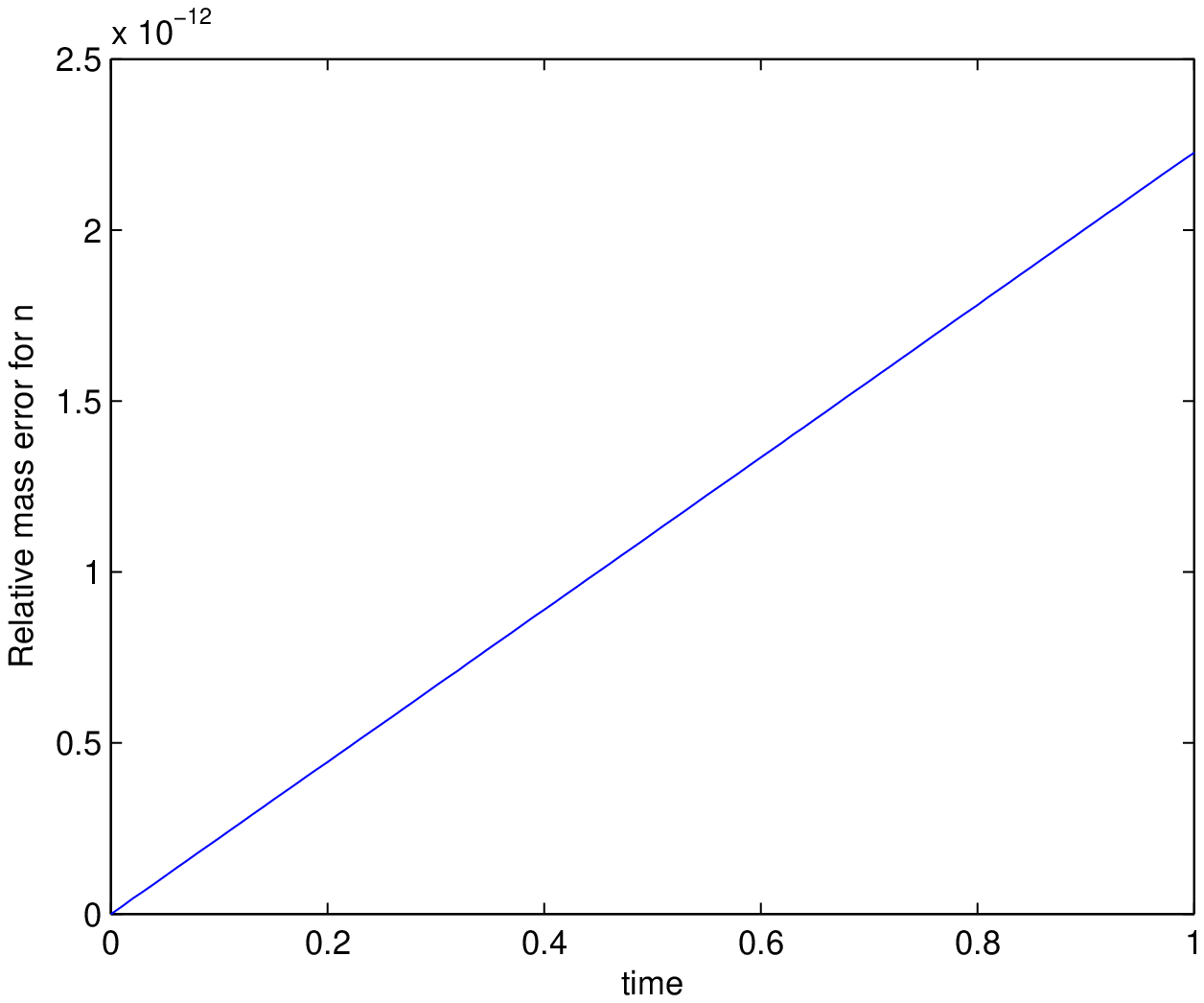}}
\hspace{0.05in}
\subfigure[minimum $p$ in $\Omega$ w.r.t. time]{
\label{fig:subfig1:6} 
\includegraphics[scale=0.4]{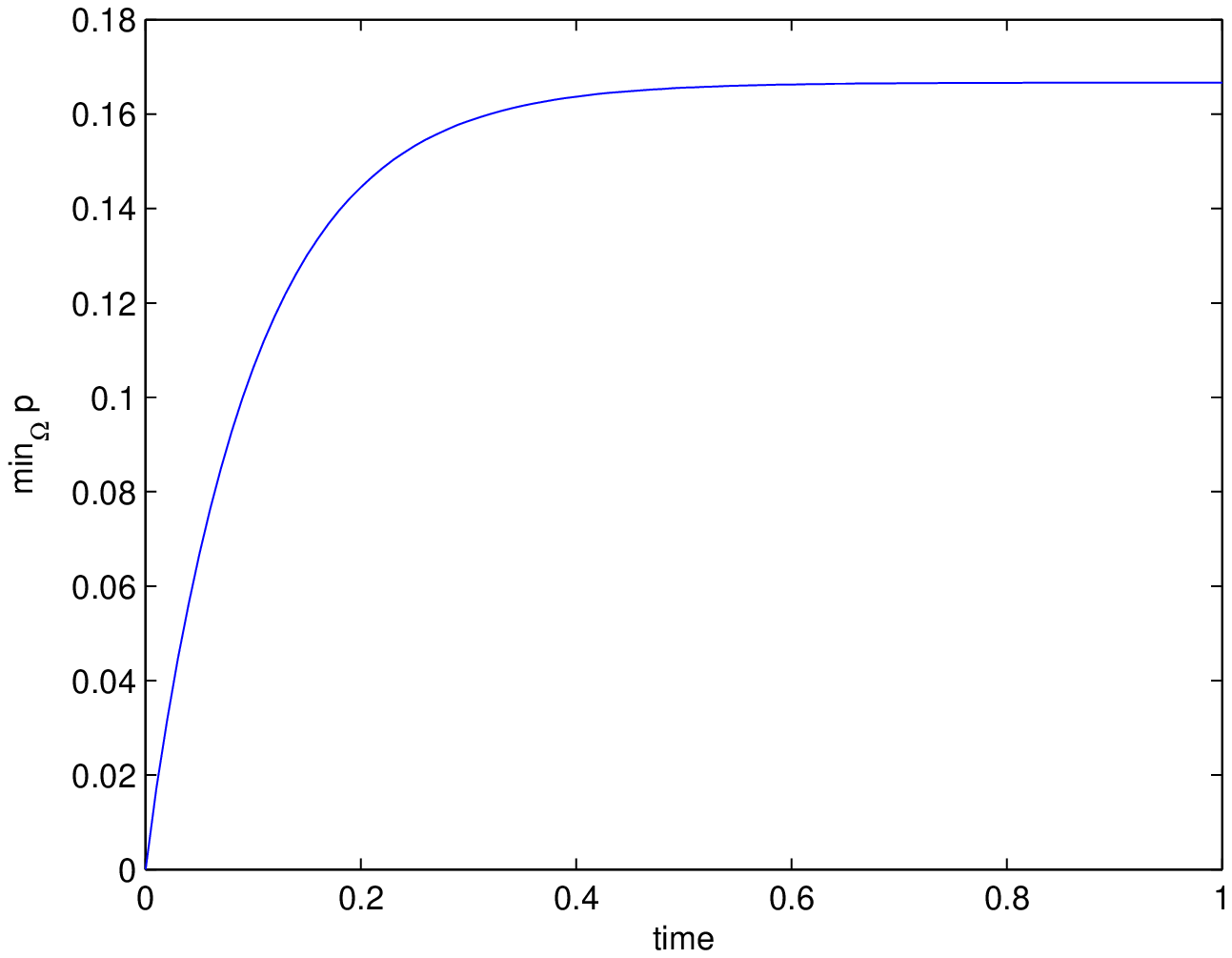}}
\subfigure[minimum $n$ in $\Omega$ w.r.t. time]{
\label{fig:subfig1:7} 
\includegraphics[scale=0.4]{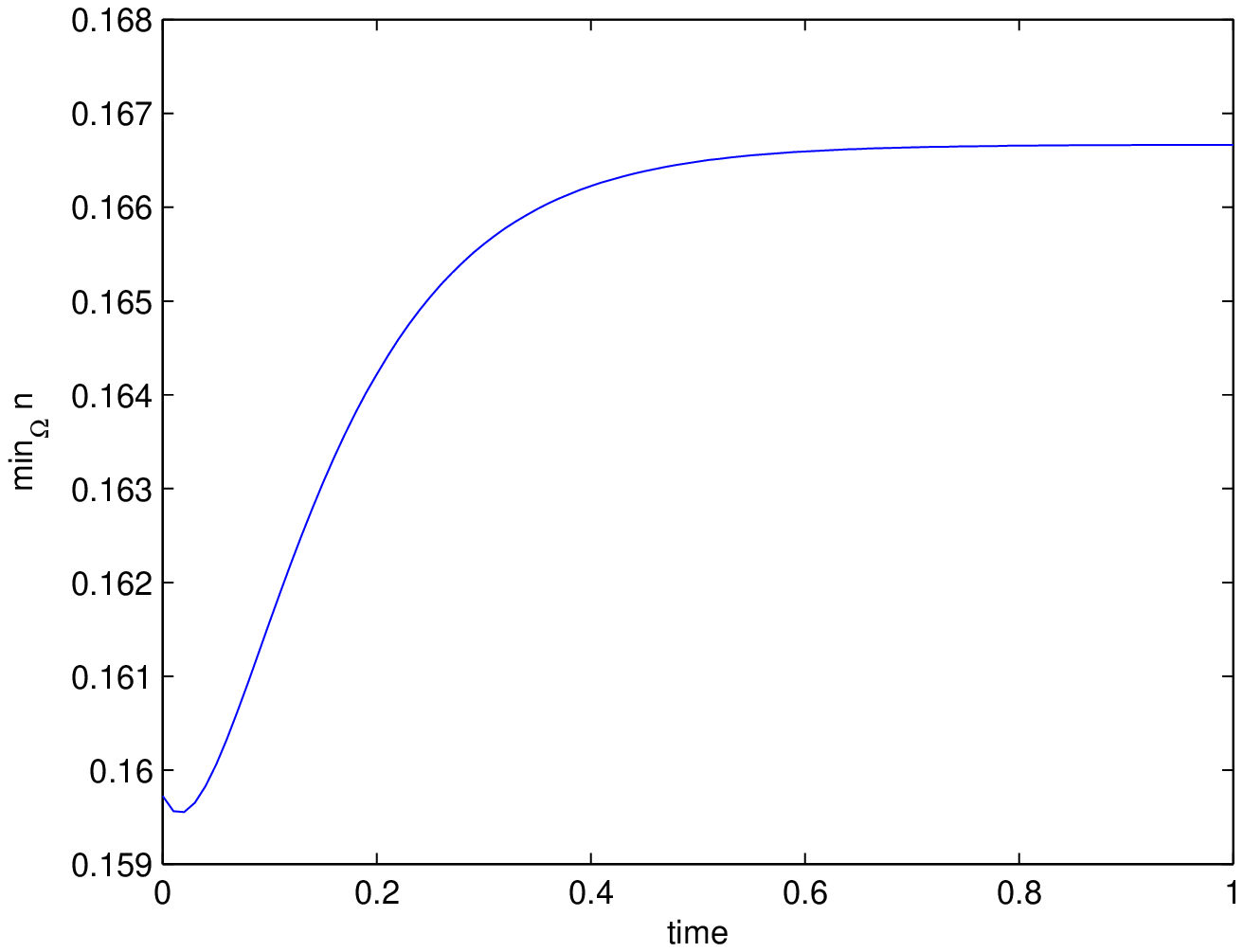}}
\caption{Numerical results for the Poisson-Nernst-Planck system with zero Neumann boundary conditions.}
\label{fig1}
\end{figure}

{\bf Example 2}. We consider the equations
(\ref{potential}) in the domain $\Omega=[0,1]\times[0,1]$
with zero Neumann boundary conditions (\ref{boundary}), and initial
conditions
\begin{align*}
  p(0,x,y)&=\frac{x^{2}}{2}-\frac{x^{3}}{3}+\frac{y^{2}}{2}-\frac{y^{3}}{3},\\
n(0,x,y)&=(\frac{x^{2}}{2}-\frac{x^{3}}{3})(\frac{y^{2}}{2}-\frac{y^{3}}{3})+\frac{23}{144},
\end{align*}
where the initial conditions are set to satisfy the conditions
(\ref{initial})-(\ref{mass}).

We carry out numerical computation with $h=\frac{1}{108}, \Delta t=0.01, T=1$ using the scheme (\ref{p_discrete})- (\ref{phi_discrete}). In the numerical computation, we set electric potential at the first point to be zero at each time step in order to get unique solutions. Figure \ref{fig1} gives the evolution of electric potential energy, mass of $p$, mass of $n$, the relative mass error of $p$, the relative mass error of $n$, $\min_{\Omega} (p)$,  $\min_{\Omega} (n)$, respectively.  One can see that the
electric potential energy decays, and the mass of $p,n$ is exactly
conserved. Moreover, $p$ and $n$ always keep positive. All these results are consistent with the analysis in above
section.

\section{Conclusions and discussions}\label{sec:6}
Prohl et al.~\cite{Prohl} first proposed a fully implicit finite
element method for the Poisson-Nernst-Planck system. Numerically, in order to get the rigorous mass conservation and electric potential  energy decay properties, a fixed iteration method is needed for the fully implicit finite element scheme.
In this paper, we  develop a simple semi-implicit finite difference method
for the Poisson-Nernst-Planck system, which can also preserve mass and
electric potential energy identities numerically. The current method only needs to solve a
linear system at each time step, which can be done very efficiently since the all
coefficient matrices are symmetric banded. Furthermore, mesh
refinement analysis shows that the method is second order convergent in space and
first order convergent in time. And the method can be easily extended to the case of
multi-ions.

Since the Poisson-Nernst-Planck system is a nonlinear
system, theoretical convergence analysis for the proposed numerical
method will be a challenge task, we leave it as the future work.
Moreover, constructing simple and  efficient numerical
method  which can preserve the entropy law of the Poisson-Nernst-Planck system is another future goal.

\section*{Acknowledgements}
Dongdong He was supported by the Program for Young Excellent Talents at Tongji University (No. 2013KJ012), the Natural Science Foundation of China (No. 11402174) and the Scientific Research Foundation for the Returned Overseas Chinese Scholars, State Education Ministry. Kejia Pan was supported by the Natural Science Foundation of China (Nos. 41474103, 41204082),
the National High Technology Research and Development Program of China (No. 2014AA06A602),
the Natural Science Foundation of Hunan Province of China (No. 2015JJ3148) and Mathematics and Interdisciplinary Sciences Project of Central South University. The authors would like to thank Professor Huaxiong Huang for useful discussions.



\end{document}